\numberwithin{equation}{section}
\numberwithin{figure}{section}
\theoremstyle{plain}
\newtheorem{thm}{\protect\theoremname}
  \theoremstyle{definition}
  \newtheorem{defn}[thm]{\protect\definitionname}
  \theoremstyle{remark}
  \newtheorem*{rem*}{\protect\remarkname}
  \theoremstyle{remark}
  \newtheorem{rem}[thm]{\protect\remarkname}
  \theoremstyle{plain}
  \newtheorem{prop}[thm]{\protect\propositionname}
  \theoremstyle{remark}
  \newtheorem{claim}[thm]{\protect\claimname}
  \theoremstyle{plain}
  \newtheorem{lem}[thm]{\protect\lemmaname}
  \theoremstyle{plain}
  \newtheorem{cor}[thm]{\protect\corollaryname}
  \providecommand{\claimname}{Claim}
  \providecommand{\definitionname}{Definition}
  \providecommand{\remarkname}{Remark}
\providecommand{\theoremname}{Theorem}
  \providecommand{\claimname}{Claim}
  \providecommand{\corollaryname}{Corollary}
  \providecommand{\definitionname}{Definition}
  \providecommand{\lemmaname}{Lemma}
  \providecommand{\propositionname}{Proposition}
  \providecommand{\remarkname}{Remark}
\providecommand{\theoremname}{Theorem}
\providecommand{\claimname}{Claim}
  \providecommand{\corollaryname}{Corollary}
  \providecommand{\definitionname}{Definition}
  \providecommand{\lemmaname}{Lemma}
  \providecommand{\propositionname}{Proposition}
  \providecommand{\remarkname}{Remark}
\providecommand{\theoremname}{Theorem}
  \providecommand{\claimname}{Claim}
  \providecommand{\corollaryname}{Corollary}
  \providecommand{\definitionname}{Definition}
  \providecommand{\lemmaname}{Lemma}
  \providecommand{\propositionname}{Proposition}
  \providecommand{\remarkname}{Remark}
\providecommand{\theoremname}{Theorem}
\begin{document}

\title{Calder\'on couples of $p$-convexified Banach lattices}

\author{Eliran Avni and Michael Cwikel}

\address{Department of Mathematics, Technion - Israel Institute of Technology,
Haifa 32000, Israel}

\email{eliran5@tx.technion.ac.il, mcwikel@math.technion.ac.il}

\keywords{Calder\'on couple, Banach lattice, $p$-convexification, interpolation,
sublinear operators.}

\subjclass{Primary 
46B70, 
46B42, 
46E30  
}

\thanks{The research of the first named author was supported by a Chester and Taube Hurwitz Foundation Fellowship. The research of the second named author
was supported by funding from the Martin and Sima Jelin Chair in Mathematics
at the Technion. }

\begin{abstract}
{\normalsize We deal with the question of whether the $p$-convexified
couple $\left(X_{0}^{(p)},X_{1}^{(p)}\right)$ is a Calder\'on couple
under the assumption that $\left(X_{0},X_{1}\right)$ is a Calder\'on
couple of Banach lattices on some measure space. We find that the
answer is affirmative whenever the spaces $X_{0},\, X_{1}$ are complete
lattices and an additional ``positivity'' assumption is imposed
regarding $\left(X_{0},X_{1}\right)$. We also prove a quantitative
version of the result with appropriate norm estimates. In the appendix
we identify some cases where appropriate assumptions on a Banach lattice
$X$ guarantee that it is indeed a complete lattice.}{\normalsize \par}
\end{abstract}
\maketitle

\section{preliminaries, definitions, notations, conventions}
\begin{defn}
A \textbf{Banach lattice of measurable functions} $X$ is a Banach
space of (equivalence classes of) measurable functions defined on
a certain measure space $(\Omega,\Sigma,\mu)$ and taking values in
$\mathbb{R}$ or $\mathbb{C}$ (in this paper, in $\mathbb{R}$),
with the following property: if $f,g:\Omega\rightarrow\mathbb{R}$
are two measurable functions, and if $f\in X$ and $\vert g\vert\leq\vert f\vert$
almost everywhere then we also have $g\in X$ and $\parallel g\parallel\leq\parallel f\parallel$.
In this paper we will usually use the shorter terminology ``Banach
lattice'' although in other settings this is used in a more abstract
context (see e.g. \cite{LT} Definition 1.a.1 p.~1). 
\end{defn}
\begin{defn} For each Banach lattice $X$ of measurable functions
on a measure space $\left(\Omega,\Sigma,\mu\right)$ and each $p\in(1,\infty)$
we recall that the \textbf{$p$-convexification of $X$} is the set
$X^{(p)}$ of all measurable functions $f:\Omega\to\mathbb{R}$ for
which $\left|f\right|^{p}\in X$. When endowed with the norm 
$\left\Vert f\right\Vert _{X^{(p)}}
=\left(\left\Vert \left|f\right|^{p}\right\Vert ^{1/p}\right)$
it is also a Banach lattice. \end{defn}

\begin{defn} Whenever $X_{0},\, X_{1}$ are two Banach lattices with
the same underlying measure space $(\Omega,\Sigma,\mu)$ we define
$X_{0}+X_{1}$ to be the space of all measurable functions $f:\Omega\rightarrow\mathbb{R}$
for which there are $a_{j}\in X_{j}$ ($j=0,1$) such that $f=a_{0}+a_{1}$.
This is a Banach space (in fact a Banach lattice), when endowed with
the following norm: 
\begin{equation}
\left\Vert f\right\Vert _{X_{0}+X_{1}}=\inf\left\{ \left\Vert a_{0}\right\Vert _{X_{0}}+\left\Vert a_{1}\right\Vert _{X_{1}}\vert a_{j}\in X_{j}\,,\, j=0,1\,,\, f=a_{0}+a_{1}\right\} \label{definition of the X_0+X_1 norm}
\end{equation}

\end{defn}
\begin{rem*}
Proofs that (\ref{definition of the X_0+X_1 norm}) is a norm rather
than merely a seminorm can be found, e.g. in \cite{cn} Remark 1.41
pp.~34-35 or \cite{kps} Corollary 1, p.~42. This fact implies that
$\left(X_{0},X_{1}\right)$ is a \textbf{Banach couple}, i.e. that
there exists some topological Hausdorff vector space $\mathcal{X}$
such that $X_{0}$ and $X_{1}$ are both continuously embedded in
$\mathcal{X}$ (clearly one can choose $\mathcal{X}=X_{0}+X_{1}$).

In a more general context, whenever $\left(X_{0},X_{1}\right)$ is
a Banach couple, the aforementioned space $X_{0}+X_{1}$ is a Banach
space in which $X_{0}$ and $X_{1}$ are continuously embedded (see
e.g. \cite{bl,BrudnyiKruglyak}). 
\end{rem*}
\begin{defn} For each fixed $t>0$ the following functional 
\[
K(t,f;X_{0},X_{1})=\mbox{inf}\left\{ \left\Vert a_{0}\right\Vert _{X_{0}}+t\left\Vert a_{1}\right\Vert _{X_{1}}\vert 
\, \,
a_{j}\in X_{j},j=0,1,f=a_{0}+a_{1}\right\} 
\]
 is equivalent to the norm (\ref{definition of the X_0+X_1 norm})
and is known as the \textbf{Peetre $K$-functional} (see e.g. \cite{bl,BrudnyiKruglyak}).
\end{defn}

\begin{defn} \label{defn:T:(X_0,X_1) into itself is bounded and linear}The
statement ``\textbf{$T:(X_{0},X_{1})\rightarrow(X_{0},X_{1})$ is
a bounded linear operator}'' means that $T$ is a linear operator
from $X_{0}+X_{1}$ into itself such that the restriction of $T$
to $X_{j}$ is a bounded operator from $X_{j}$ into itself (for $j=0,1$).
\end{defn}

\begin{rem*} We remark that if $T:(X_{0},X_{1})\rightarrow(X_{0},X_{1})$
is a bounded linear operator then automatically $T$ is also a bounded
linear operator from $X_{0}+X_{1}$ into itself, and the following
inequality holds: 
\[
\Vert T\Vert_{X_{0}+X_{1}\to X_{0}+X_{1}}\leq\max\left\{ \Vert T\Vert_{X_{0}\rightarrow X_{0}},\Vert T\Vert_{X_{1}\rightarrow X_{1}}\right\} \,.
\]

\end{rem*}

We recall that $X_{0}\cap X_{1}$ , when endowed with the norm 
$$\left\Vert x\right\Vert _{X_{0}\cap X_{1}}
=\mbox{max}\left\{ \left\Vert x\right\Vert _{X_{0}},\left\Vert x
\right\Vert _{X_{1}}\right\},$$
is also a Banach space. This will be relevant in the following definition.

\begin{defn} Whenever $X_{0}$ and $X_{1}$ are two Banach spaces
continuously embedded in some topological Hausdorff vector space $\mathcal{X}$,
the statement ``\textbf{$A$ is an interpolation space with respect
to $(X_{0},X_{1})$}'' is a concise way to say the following: $A$
is a Banach space satisfying $X_{0}\cap X_{1}\subseteq A\subseteq X_{0}+X_{1}$
where all the inclusions are continuous, and the restriction to $A$
of every bounded linear operator $T:(X_{0},X_{1})\rightarrow(X_{0},X_{1})$
is a bounded operator from $A$ into itself. \end{defn}

\begin{rem*} A Banach space $A$ satisfying $X_{0}\cap X_{1}\subseteq A\subseteq X_{0}+X_{1}$
where all the inclusions are continuous is also called an \textbf{intermediate
space} with respect to $(X_{0},X_{1})$.\end{rem*}

\begin{defn} \label{rdcc}The statement ``\textbf{$(X_{0},X_{1})$
is a Calder\'on couple}'' means that the Banach couple $(X_{0},X_{1})$
has the following property: if $f,g\in X_{0}+X_{1}$ and if $K(t,g;X_{0},X_{1})\leq K(t,f;X_{0},X_{1})$
for every $t>0$, then there exists a bounded linear operator $T:(X_{0},X_{1})\rightarrow(X_{0},X_{1})$
such that $Tf=g$.

Let $C$ be a positive constant. Then the statement ``\textbf{$(X_{0},X_{1})$
is a $C$-Calder\'on couple}'' means that $\left(X_{0},X_{1}\right)$
has the above property, and furthermore the operator $T$ with the
above properties can also be assumed to satisfy $\left\Vert T\right\Vert _{X_{j}\to X_{j}}\le C$
for $j=0,1$. \end{defn}

\begin{rem*} In a number of papers, various alternative terminologies
are used for the notion of a Calder\'on couple. These include $C$-couple,
$K$-adequate couple, $K$-monotone couple, Calder\'on-Mityagin couple
and $\mathcal{CM}$ couple. Of course the interesting and well known
property of Calder\'on couples is that all their interpolation spaces
can be characterized by a simple monotonicity property in terms of
the $K$-functional 
(see e.g. \cite{bl} or \cite{BrudnyiKruglyak}
or \cite{cn} and many of the references therein). 

In fact, it can easily be shown that $\left(X_{0},X_{1}\right)$ is
a Calder\'on couple if and only if all the interpolation spaces $X$
of $\left(X_{0},X_{1}\right)$ are precisely those intermediate spaces
which satisfy the following condition: For every $f,g\in X_{0}+X_{1}$,
if $f\in X$ and if $K(t,g;X_{0},X_{1})\leq K(t,f;X_{0},X_{1})$ for
every $t>0$ then $g\in X$ as well. 

Indeed the property that all the interpolation spaces of $\left(X_{0},X_{1}\right)$
are characterized by the above-mentioned condition is usually taken
to be the \textbf{definition} of a Calder\'on couple. We have simply
found it more convenient here to use an alternative but clearly equivalent
definition.\end{rem*}

Most of the definitions in this section appear extensively in the
literature, but the following one is perhaps new. It relates to a
notion which has been considered in a so far unpublished paper \cite{cwikelmastylo}.

\begin{defn} \label{dcc}The statement ``\textbf{$(X_{0},X_{1})$
is a positive Calder\'on couple}'' means that $(X_{0},X_{1})$ is a
Banach couple of Banach lattices on the same underlying measure space
with the following property: If $f,g\in X_{0}+X_{1}$ and if $K(t,g;X_{0},X_{1})\leq K(t,f;X_{0},X_{1})$
for every $t>0$ and if also $f,g\geq0$ then there exists a \textbf{positive}
bounded linear operator $T:(X_{0},X_{1})\rightarrow(X_{0},X_{1})$
such that $Tf=g$. ($T$ is positive in the sense that if $h\geq0$
a.e. then $Th\geq0$ a.e.)

Analogously to before, the statement ``\textbf{$(X_{0},X_{1})$ is
a positive $C$-Calder\'on couple}'' means that $\left(X_{0},X_{1}\right)$
is a positive Calder\'on couple and, furthermore, the operator $T$
with the above properties can also be assumed to satisfy $\left\Vert T\right\Vert _{X_{j}\to X_{j}}\le C$
for $j=0,1$. \end{defn}

\begin{rem*} Using the fact that pointwise multiplication by a unimodular
measurable function is a norm one linear operator on any Banach lattice,
it is clear that if $\left(X_{0},X_{1}\right)$ is a positive Calder\'on
couple then it is also a Calder\'on couple in the usual sense. Similarly
a positive $C$-Calder\'on couple is a $C$-Calder\'on couple.

The reverse implications are not true. Although all 
known
``natural'' examples of Calder\'on couples of Banach lattices are also positive
Calder\'on couples, it is possible to produce an example of a $C$-Calder\'on
couple of lattices which is not a positive Calder\'on couple. It can
be constructed via a result of Lozanovskii, using a slight variant
of an example in the last section of \cite{cwikelmastylo}. \end{rem*}

\begin{defn} The statement ``\textbf{A Banach lattice $X$ has the
Least Upper Bound Property (or LUBP)}'' means that every subset of
$X$ which is bounded from above has a least upper bound. More precisely,
if $Q$ is a subset of $X$, and if there exists an element $x\in X$
such that $q\le x$ for any $q\in Q$ then there is an element $y\in X$
such that

\[
\begin{array}{cl}
\mbox{(i)} & q\le y\mbox{ for every }q\in Q\:\mbox{ and }\\
\\
\mbox{(ii)} & \mbox{If an element }z\in X\mbox{ satisfies }q\le z\mbox{ for every }q\in Q\mbox{ then }y\le z\,.
\end{array}
\]

A Banach lattice which has the LUBP is also called a ``\textbf{complete
lattice}''
or a ``\textbf{Dedekind complete lattice}".
\end{defn}
\begin{rem}
\label{rmk: examples of complete lattices. }There are many well known
examples of complete lattices. For instance, the lattice $L_{p}(\Omega,\Sigma,\mu)$
is complete whenever $1<p<\infty$, and the lattice $L_{\infty}(\Omega,\Sigma,\mu)$
is complete whenever the measure space $(\Omega,\Sigma,\mu)$ is $\sigma$-
finite (see \cite{Dunford and Schwartz}, Chapter 4, section 8, Theorems
22, 23, p.~302). In addition, a Banach lattice of measurable functions
is a complete lattice whenever it is separable, or whenever the underlying
measure space is $\sigma$-finite (see appendix).
\end{rem}
\begin{defn} (Cf.~\cite{Ioffe}) The statement ``\textbf{a Banach
lattice $X$ has the Hahn-Banach Extension Property (or HBEP)}''
means the following: For any linear space $Z$, any subspace $Y\subseteq Z$,
and any sublinear operator $p:Z\rightarrow X$, if $f:Y\rightarrow X$
is a linear operator such that $\left|f(y)\right|\leq p(y)$ for every
$y\in Y$, then there is a linear operator $F:Z\rightarrow X$ such
that $\left|F(x)\right|\leq p(x)$ for every $x\in Z$ and $f(y)=F(y)$
for every $y\in Y$. \end{defn}

We would also like to mention the following two results, that we shall
resort to later on:
\begin{prop}
\label{mikowski}Assume $X$ is a Banach lattice defined on a measure
space $(\Omega,\Sigma,\mu)$ and $G:X\rightarrow X$ is a positive
linear operator. Then, for every $1<p<\infty$ and every two measurable
functions $h_{1},h_{2}:\Omega\rightarrow\mathbb{R}$ such that $\vert h_{1}\vert^{p},\vert h_{2}\vert^{p}\in X$
we have the pointwise almost everywhere inequality

\[
\left(G(\vert h_{1}+h_{2}\vert^{p})\right)^{\frac{1}{p}}\leq\left(G(\vert h_{1}\vert^{p})\right)^{\frac{1}{p}}+\left(G(\vert h_{2}\vert^{p})\right)^{\frac{1}{p}}
\]

\end{prop}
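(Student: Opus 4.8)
The plan is to reduce everything to a pointwise Hölder inequality for the positive operator $G$ and then mimic the classical duality proof of Minkowski's inequality. Throughout, let $q=p/(p-1)$ denote the exponent conjugate to $p$, so that $1/p+1/q=1$ and $(p-1)q=p$.

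First I would dispose of the absolute values. Since $\vert h_{1}+h_{2}\vert\le\vert h_{1}\vert+\vert h_{2}\vert$ pointwise and $t\mapsto t^{p}$ is increasing on $[0,\infty)$, we have $\vert h_{1}+h_{2}\vert^{p}\le(\vert h_{1}\vert+\vert h_{2}\vert)^{p}$, and because $G$ is positive (hence monotone) it suffices to prove the inequality with $h_{1},h_{2}$ replaced by the nonnegative functions $\vert h_{1}\vert,\vert h_{2}\vert$. Thus I may assume $h_{1},h_{2}\ge0$ and write $h:=h_{1}+h_{2}$. A convexity estimate $(h_{1}+h_{2})^{p}\le2^{p-1}(h_{1}^{p}+h_{2}^{p})$ together with the ideal property of the lattice $X$ guarantees that $h^{p}\in X$, and likewise that all the products appearing below (such as $h_{i}h^{p-1}$) lie in $X$, so that $G$ may legitimately be applied to them.

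The key step is the following pointwise Hölder inequality for $G$: for nonnegative measurable $f,g$ with $f^{p},g^{q}\in X$,
\[
G(fg)\le\left(G(f^{p})\right)^{1/p}\left(G(g^{q})\right)^{1/q}\quad\text{a.e.}
\]
To prove it I would start from Young's inequality $fg\le\frac{\lambda^{p}}{p}f^{p}+\frac{\lambda^{-q}}{q}g^{q}$, valid pointwise for every fixed $\lambda>0$, apply the positive linear operator $G$ to obtain $G(fg)\le\frac{\lambda^{p}}{p}G(f^{p})+\frac{\lambda^{-q}}{q}G(g^{q})$ a.e., and then optimize over $\lambda$. The one genuinely delicate point --- and the place I expect the main technical care to be needed --- is that the exceptional null set produced when applying $G$ depends on $\lambda$, so one cannot simply take the infimum over the continuum of values of $\lambda$. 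I would circumvent this by letting $\lambda$ range over the positive rationals: the countable union of the corresponding null sets is still null, and off this single null set the inequality holds simultaneously for all rational $\lambda$; since the right-hand side is continuous in $\lambda$, it then holds for all real $\lambda>0$. Minimizing $\frac{\lambda^{p}}{p}A+\frac{\lambda^{-q}}{q}B$ over $\lambda>0$, with $A=G(f^{p})(\omega)\ge0$ and $B=G(g^{q})(\omega)\ge0$, gives exactly $A^{1/p}B^{1/q}$; the degenerate cases $A=0$ or $B=0$ are handled by noting that the infimum is then $0$ while $G(fg)\ge0$, forcing $G(fg)=0$.

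Finally I would deduce the stated Minkowski inequality. Writing $h^{p}=h\cdot h^{p-1}=h_{1}h^{p-1}+h_{2}h^{p-1}$ and using linearity of $G$ gives $G(h^{p})=G(h_{1}h^{p-1})+G(h_{2}h^{p-1})$. Applying the Hölder step to each summand with $g=h^{p-1}$ (so that $g^{q}=h^{(p-1)q}=h^{p}$) yields $G(h_{i}h^{p-1})\le\left(G(h_{i}^{p})\right)^{1/p}\left(G(h^{p})\right)^{1/q}$, whence
\[
G(h^{p})\le\left[\left(G(h_{1}^{p})\right)^{1/p}+\left(G(h_{2}^{p})\right)^{1/p}\right]\left(G(h^{p})\right)^{1/q}.
\]
On the set where $G(h^{p})>0$ I divide through by $\left(G(h^{p})\right)^{1/q}$ and use $1-1/q=1/p$ to obtain the claim; on the set where $G(h^{p})=0$ the left-hand side of the desired inequality vanishes and there is nothing to prove.
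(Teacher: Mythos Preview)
Your argument is correct and is precisely the classical route: reduce to nonnegative $h_{1},h_{2}$, establish the pointwise H\"older inequality $G(fg)\le\left(G(f^{p})\right)^{1/p}\left(G(g^{q})\right)^{1/q}$ via Young's inequality and optimization (with the careful handling of null sets over rational $\lambda$), and then run the standard duality trick $h^{p}=h_{1}h^{p-1}+h_{2}h^{p-1}$. All the membership checks in $X$ that license applying $G$ are justified, and the degenerate cases are handled properly.

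The paper itself does not supply a proof of this proposition; it simply cites Bourbaki, \textit{Integration I}, Chapter~1, \S2, Proposition~3 (and also mentions Haase). Your proof is essentially the argument one finds in such references, so there is nothing to contrast: you have reproduced the standard proof that the paper delegates to the literature.
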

The proof of this proposition appears in many publications, the earliest
of which we are aware is \cite{Bourbaki}, Chapter 1, Section 2, Proposition
3 dating from the 1950's (However in the bibiography we list a new
English translation of this book).

\begin{thm} \label{thm:LUBP implies HBEP}Every complete Banach lattice
has the HBEP. \end{thm}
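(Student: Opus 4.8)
The plan is to prove that every complete Banach lattice $X$ has the Hahn--Banach Extension Property by reducing the statement about $X$-valued sublinear operators to the classical scalar Hahn--Banach theorem, exploiting the order-completeness of $X$ to carry out the supremum step of the usual argument. The key observation is that the Least Upper Bound Property is precisely what allows the standard proof of the Hahn--Banach theorem, where one extends a functional one dimension at a time and must choose a value lying between a supremum of lower bounds and an infimum of upper bounds, to go through with the target being the ordered vector space $X$ rather than $\mathbb{R}$.

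\begin{proof}
Let $Z$ be a linear space, $Y\subseteq Z$ a subspace, $p:Z\rightarrow X$ a sublinear operator (so $p(z_1+z_2)\le p(z_1)+p(z_2)$ and $p(\lambda z)=\lambda p(z)$ for $\lambda\ge 0$), and let $f:Y\rightarrow X$ be linear with $\left|f(y)\right|\le p(y)$ for all $y\in Y$. We must produce a linear extension $F:Z\rightarrow X$ with $\left|F(x)\right|\le p(x)$ for all $x\in Z$.

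First I would reformulate the hypothesis $\left|f(y)\right|\le p(y)$ in a form more convenient for the one-dimensional extension step. Since $X$ is a lattice, the condition $\left|f(y)\right|\le p(y)$ is equivalent to the pair of inequalities $f(y)\le p(y)$ and $-f(y)\le p(-y)$, i.e. to $f$ being \emph{dominated} by $p$ in the sense that $f(y)\le p(y)$ for all $y\in Y$ (the second inequality follows by replacing $y$ with $-y$ and using linearity of $f$). Thus it suffices to prove the purely order-theoretic statement: a linear $f:Y\rightarrow X$ with $f(y)\le p(y)$ for all $y\in Y$ admits a linear extension $F:Z\rightarrow X$ with $F(x)\le p(x)$ for all $x\in Z$; the bound $\left|F(x)\right|\le p(x)$ is then recovered from $F(x)\le p(x)$ and $-F(x)=F(-x)\le p(-x)$.

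Next I would carry out the standard single-step extension. Fix $z_0\in Z\setminus Y$ and set $Y'=Y\oplus\mathbb{R}z_0$. For any $y_1,y_2\in Y$ one has, using sublinearity of $p$ and $f(y_1-y_2)\le p(y_1-y_2)$, the inequality $f(y_1)-p(y_1-z_0)\le p(y_2+z_0)-f(y_2)$. Hence the subset $A=\left\{ f(y)-p(y-z_0):y\in Y\right\}$ of $X$ is bounded above by every element of the set $\left\{ p(y'+z_0)-f(y'):y'\in Y\right\}$, so in particular $A$ is bounded from above. By the LUBP, $A$ has a least upper bound $c=\sup A\in X$, and the defining property of the supremum guarantees that $c$ simultaneously satisfies $f(y)-p(y-z_0)\le c$ and $c\le p(y+z_0)-f(y)$ for all $y\in Y$. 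Setting $F(y+\lambda z_0)=f(y)+\lambda c$ then yields a well-defined linear functional on $Y'$, and checking the two cases $\lambda>0$ and $\lambda<0$ (rescaling by $\lvert\lambda\rvert$ and invoking the two inequalities above) shows $F\le p$ on $Y'$. Finally I would invoke Zorn's Lemma on the poset of dominated linear extensions of $f$, ordered by extension, exactly as in the classical proof; the single-step argument shows no maximal element can have proper domain, so a maximal extension is defined on all of $Z$.

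The main obstacle, and the only place where completeness of the lattice is genuinely used, is the existence of the supremum $c=\sup A$: in the scalar case this is automatic from the completeness of $\mathbb{R}$, but for a general Banach lattice an upper-bounded set need not have a least upper bound, and it is exactly the LUBP hypothesis that supplies it. Everything else is a transcription of the real Hahn--Banach argument with $\mathbb{R}$ replaced by $X$, relying only on the compatibility of the order with the vector operations.
\end{proof}
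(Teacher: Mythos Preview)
Your argument is correct and is precisely the classical Kantorovich proof: the one-step extension using the LUBP to select $c=\sup A$, followed by Zorn's lemma. The paper itself does not supply a proof of this theorem but simply attributes it to Kantorovich (1935) and refers to \cite{Ioffe} for background, so your write-up is in fact the argument the paper is invoking. One small remark: in the paper's usage ``sublinear'' means absolutely homogeneous ($p(\lambda z)=|\lambda|p(z)$) rather than merely positively homogeneous, and it is this symmetry $p(-x)=p(x)$ that makes your final recovery of $|F(x)|\le p(x)$ from $F(x)\le p(x)$ and $-F(x)\le p(-x)$ go through; you may want to state that explicitly.
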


The proof of this theorem apparently dates back to L.~V.~Kantorovic'
famous paper from 1935 (see \cite{Kantorovich}). We refer to \cite{Ioffe}
for a short discussion of the history of 
this result.

\begin{rem*} In fact, a Banach lattice has the HBEP if and only if
it is a complete lattice. A proof of this theorem can be found in
a series of papers by W.~Bonnice, R.~Silverman, T.~O.~To and T.~Yen
(see \cite{Silverman and Yen,Bonnice and Silverman1,Bonnice and silverman2}
and \cite{To}). Another source one may refer to is \cite{M.M. Day},
Chapter 4, Section 3, pp.~135-137. A.~D.~Ioffe used a different
approach in proving the same theorem. His proof can be found in \cite{Ioffe}.\end{rem*}

\section{\label{sec:main}THE MAIN PART }

In this section we prove Theorem \ref{thm:a banach couple remains a banach couple},
which is the main result of this paper. It is clear that an analogous
result can be readily obtained in the context of relative Calder\'on
couples, i.e.~where the relevant operators map between two possibly
different couples $\left(X_{0},X_{1}\right)$ and $\left(Y_{0},Y_{1}\right)$.
The definition of relative Calder\'on couples or an equivalent variant
of it, sometimes with different terminology, and often with additional
results about these couples, can be found in many papers, e.g., \cite{BrudnyiKruglyak}
(Definition 4.4.3 p.~579) or \cite{bl} pp.~83-84 or 
\cite{CwMonPros2}
pp.~123-124 or \cite{cn} p.~29 or \cite{cp}
Section 4, pp.~28-39.
For simplicity of presentation we only consider the case where $\left(X_{0},X_{1}\right)=\left(Y_{0},Y_{1}\right)$.

\begin{thm} \label{thm:a banach couple remains a banach couple}Suppose
$(X_{0},X_{1})$ is a positive Calder\'on couple of Banach lattices
defined on an underlying measure space $(\Omega,\Sigma,\mu)$. Suppose
$X_{0}$ and $X_{1}$ are complete lattices. Then, $(X_{0}^{(p)},X_{1}^{(p)})$
is a Calder\'on couple for each $p\in(1,\infty)$.

If, furthermore, $\left(X_{0},X_{1}\right)$ is a positive $C$-Calder\'on
couple then $(X_{0}^{(p)},X_{1}^{(p)})$ is a $2^{1-\frac{1}{p}}C^{\frac{1}{p}}$-Calder\'on
couple. \end{thm}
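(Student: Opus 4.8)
The plan is to transfer the hypothesis, which concerns the $K$-functional of the convexified couple, into a statement about the $K$-functional of $(X_{0},X_{1})$ applied to the nonnegative functions $|f|^{p}$ and $|g|^{p}$, invoke the positive Calder\'on hypothesis there, and then lift the resulting positive operator back up to $(X_{0}^{(p)},X_{1}^{(p)})$ by a sublinearity/Hahn--Banach construction. Since the $K$-functional of a couple of lattices is unchanged when a function is replaced by its modulus, I may assume $f,g\ge0$ and work with $f^{p},g^{p}$.

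The heart of the matter is the sharp pair of inequalities
\[
K\!\left(t^{p},h^{p};X_{0},X_{1}\right)\le K\!\left(t,h;X_{0}^{(p)},X_{1}^{(p)}\right)^{p}\le 2^{\,p-1}K\!\left(t^{p},h^{p};X_{0},X_{1}\right),\qquad t>0,
\]
valid for every $0\le h\in X_{0}^{(p)}+X_{1}^{(p)}$. The right-hand inequality is the easy one: given $h^{p}=b_{0}+b_{1}$ with $b_{j}\ge0$, I set $a_{0}=b_{0}^{1/p}$ and $a_{1}=(b_{0}+b_{1})^{1/p}-b_{0}^{1/p}$, note $a_{0}+a_{1}=h$, $a_{0}^{p}=b_{0}$ and $a_{1}^{p}\le b_{1}$ by concavity of $s\mapsto s^{1/p}$, and then pass from $\|b_{0}\|^{1/p}+t\|b_{1}\|^{1/p}$ to $2^{1-1/p}\big(\|b_{0}\|+t^{p}\|b_{1}\|\big)^{1/p}$ using the elementary bound $u^{1/p}+v^{1/p}\le 2^{1-1/p}(u+v)^{1/p}$. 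The left-hand inequality is the delicate one, and is where the sharp constant originates: for any decomposition $h=a_{0}+a_{1}$ and any $s\in(0,1)$, convexity of $\tau\mapsto\tau^{p}$ gives the pointwise estimate $h^{p}\le s^{-(p-1)}|a_{0}|^{p}+(1-s)^{-(p-1)}|a_{1}|^{p}$; splitting $h^{p}$ proportionally into $c_{0}\le s^{-(p-1)}|a_{0}|^{p}\in X_{0}$ and $c_{1}\le(1-s)^{-(p-1)}|a_{1}|^{p}\in X_{1}$ with $c_{0}+c_{1}=h^{p}$, and then minimizing the resulting bound $s^{-(p-1)}\||a_{0}|^{p}\|+t^{p}(1-s)^{-(p-1)}\||a_{1}|^{p}\|$ over $s$, yields exactly $\big(\||a_{0}|^{p}\|^{1/p}+t\||a_{1}|^{p}\|^{1/p}\big)^{p}$, i.e.\ constant $1$.

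Chaining these with the hypothesis $K(t,g;X_{0}^{(p)},X_{1}^{(p)})\le K(t,f;X_{0}^{(p)},X_{1}^{(p)})$ and writing $s=t^{p}$ gives $K(s,|g|^{p};X_{0},X_{1})\le 2^{\,p-1}K(s,|f|^{p};X_{0},X_{1})=K(s,2^{\,p-1}|f|^{p};X_{0},X_{1})$ for all $s>0$. Since $2^{\,p-1}|f|^{p}$ and $|g|^{p}$ are nonnegative and $(X_{0},X_{1})$ is a positive ($C$-)Calder\'on couple (Definition \ref{dcc}), there is a positive bounded linear $S:(X_{0},X_{1})\to(X_{0},X_{1})$, with $\|S\|_{X_{j}\to X_{j}}\le C$ in the quantitative case, such that $S\!\left(2^{\,p-1}|f|^{p}\right)=|g|^{p}$. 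Now define $P(h)=2^{1-1/p}\big(S(|h|^{p})\big)^{1/p}$. By Proposition \ref{mikowski} applied to $S$, $P$ is subadditive, and it is plainly positively homogeneous, so $P$ is sublinear; moreover $P(f)=2^{1-1/p}\big(2^{-(p-1)}|g|^{p}\big)^{1/p}=|g|$. On $\mathrm{span}\{f\}$ the linear map $cf\mapsto cg$ satisfies $|cg|=P(cf)$, so it is dominated by $P$, and since $X_{0},X_{1}$ are complete lattices the Hahn--Banach Extension Property (Theorem \ref{thm:LUBP implies HBEP}) extends it to a linear $T$ with $Tf=g$ and $|Th|\le P(h)$ for all $h$. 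Finally, for $h\in X_{j}^{(p)}$ this gives $|Th|^{p}\le 2^{\,p-1}S(|h|^{p})$, hence $\big\||Th|^{p}\big\|_{X_{j}}\le 2^{\,p-1}C\,\big\||h|^{p}\big\|_{X_{j}}$, so $T$ maps $X_{j}^{(p)}$ into itself with $\|T\|_{X_{j}^{(p)}\to X_{j}^{(p)}}\le 2^{1-1/p}C^{1/p}$; dropping the quantitative hypothesis yields the unquantified statement.

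The genuinely non-routine step is the sharp left-hand inequality: the crude bound $h^{p}\le 2^{\,p-1}(|a_{0}|^{p}+|a_{1}|^{p})$ would only give constant $2^{\,p-1}$ there, and hence the inferior constant $2^{\,2(1-1/p)}C^{1/p}$; obtaining constant $1$ forces the parametrized convexity splitting together with the exact optimization over $s$. The second point requiring care is the appeal to the Hahn--Banach Extension Property: one must verify that the lattice in which $P$ takes its values is genuinely Dedekind complete, which is exactly where the completeness hypothesis on $X_{0}$ and $X_{1}$ is consumed.
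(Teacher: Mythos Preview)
Your proof is correct and follows essentially the same route as the paper: transfer the $K$-functional hypothesis to $(X_{0},X_{1})$ via the sharp comparison $K(t^{p},|h|^{p};X_{0},X_{1})\le K(t,h;X_{0}^{(p)},X_{1}^{(p)})^{p}\le 2^{p-1}K(t^{p},|h|^{p};X_{0},X_{1})$, invoke the positive Calder\'on hypothesis to obtain a positive operator, form the sublinear map $(S(\alpha|h|^{p}))^{1/p}$ using Proposition~\ref{mikowski}, and extend the one-dimensional map $cf\mapsto cg$ by the lattice Hahn--Banach theorem. The paper cites the sharp $K$-functional inequality (due to Maligranda) without proof, whereas you supply one; conversely, the paper is explicit that the HBEP must be applied in the target lattice $X_{0}^{(p)}+X_{1}^{(p)}$, whose Dedekind completeness requires passing through Claims~\ref{Clm: X is complete iff X^p is-1} and~\ref{Clm: X_0+X_1 is complete iff X_0, X_1 are-1}---you acknowledge this point but do not carry it out.
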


Before proving the theorem, a few remarks:

For every $f\in X_{0}+X_{1}$ we define the following counterpart
of the $K$-functional:
\begin{eqnarray*}
&  & D(t,f;X_{0},X_{1})\\
& = & \mbox{inf}\left\{ \left\Vert a_{0}\right\Vert _{X_{0}}+
t\left\Vert a_{1}\right\Vert _{X_{1}}
\vert 
\, \,
a_{j}\in X_{j},\, j=0,1,\, f=a_{0}+a_{1},\, a_{0}\cdot a_{1}=0\right\} \,.
\end{eqnarray*}

It is well known that for every $t>0$ and $f\in X_{0}+X_{1}$ the inequality
\begin{equation}
K(t,f;X_{0},X_{1})\leq D(t,f;X_{0},X_{1})\leq2K(t,f;X_{0},X_{1})\label{equi. of K and D}
\end{equation}
 holds.

The straightforward proof of (\ref{equi. of K and D}) appears essentially
as part of the proof of Lemma 4.3 on p.~310 of \cite{N1} and is
also given on pp.~280-281 of \cite{CerdaCollMartin}. (The additional
assumptions made in the context of Lemma 4.3 of \cite{N1} do not
effect the validity of the argument in a more general setting.) 

\begin{claim} \label{claim: equi. of D(t,|f|^p,X)^(1/p) and D(t^(1/p),f,X^p)}For
a measurable function $f:\Omega\rightarrow\mathbb{R}$, $f\in X_{0}^{(p)}+X_{1}^{(p)}$
iff $\left|f\right|^{p}\in X_{0}+X_{1}$. In addition, for every $1<p<\infty$
the following inequality is valid 
\[
\left(D(t,\left|f\right|^{p};X_{0},X_{1})\right)^{\frac{1}{p}}\leq D(t^{\frac{1}{p}},f;X_{0}^{(p)},X_{1}^{(p)})\leq2^{1-\frac{1}{p}}\left(D(t,\left|f\right|^{p};X_{0},X_{1})\right)^{\frac{1}{p}}\,.
\]

\end{claim}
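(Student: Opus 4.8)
The plan is to exploit the fact that the decompositions admitted in the functional $D$ are into pieces with \emph{disjoint supports}, a feature that interacts very cleanly with $p$-convexification. The central observation is that if $b_0,b_1$ are measurable functions with $b_0\cdot b_1=0$, then at almost every point at most one of them is nonzero, so $\left|b_0+b_1\right|^p=\left|b_0\right|^p+\left|b_1\right|^p$ pointwise a.e. Consequently, writing $f=b_0+b_1$ with $b_0\cdot b_1=0$ and setting $c_j=\left|b_j\right|^p$ produces a pair $c_0,c_1\ge0$ with $c_0\cdot c_1=0$ and $c_0+c_1=\left|f\right|^p$; moreover $b_j\in X_j^{(p)}$ if and only if $c_j\in X_j$, and in that case $\left\Vert b_j\right\Vert_{X_j^{(p)}}=\left\Vert c_j\right\Vert_{X_j}^{1/p}$ by the very definition of the convexified norm. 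Conversely, given a disjoint decomposition $\left|f\right|^p=c_0+c_1$ with $c_j\ge0$, the functions $b_j=\mathrm{sgn}(f)\,c_j^{1/p}$ satisfy $b_0+b_1=f$, $b_0\cdot b_1=0$ and $\left|b_j\right|^p=c_j$. Thus I would first record that $(b_0,b_1)\mapsto(\left|b_0\right|^p,\left|b_1\right|^p)$ is a bijection between the admissible disjoint decompositions of $f$ for the couple $(X_0^{(p)},X_1^{(p)})$ and those of $\left|f\right|^p$ for $(X_0,X_1)$, preserving the two $X_j$-membership conditions.

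Granting this correspondence, the two $D$-functionals become infima of two different quantities over \emph{the same} index set of disjoint decompositions: at each admissible pair the functional $D(t,\left|f\right|^p;X_0,X_1)$ contributes $u+v$ while $D(t^{1/p},f;X_0^{(p)},X_1^{(p)})$ contributes $u^{1/p}+v^{1/p}$, where I set $u=\left\Vert c_0\right\Vert_{X_0}$ and $v=t\left\Vert c_1\right\Vert_{X_1}$ (so that $t^{1/p}\left\Vert b_1\right\Vert_{X_1^{(p)}}=v^{1/p}$). The whole inequality then reduces to the elementary scalar estimate
\[
(u+v)^{1/p}\le u^{1/p}+v^{1/p}\le 2^{1-1/p}(u+v)^{1/p}\qquad(u,v\ge0),
\]
whose left half is the subadditivity of the concave map $x\mapsto x^{1/p}$ and whose right half follows from Hölder's inequality applied to $1\cdot u^{1/p}+1\cdot v^{1/p}$ with exponents $p$ and $p/(p-1)$. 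Since $x\mapsto x^{1/p}$ is continuous and increasing, taking the infimum over all admissible decompositions commutes with the $p$-th root, and the two displayed inequalities pass to the infima to yield exactly $\left(D(t,\left|f\right|^p)\right)^{1/p}\le D(t^{1/p},f)\le 2^{1-1/p}\left(D(t,\left|f\right|^p)\right)^{1/p}$.

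It remains to dispose of the membership equivalence, which I would prove directly rather than through the $D$-functionals. If $\left|f\right|^p\in X_0+X_1$ then by the proof of (\ref{equi. of K and D}) one may disjointify any decomposition, obtaining $\left|f\right|^p=c_0+c_1$ with $c_j\in X_j$ and $c_0\cdot c_1=0$; the correspondence above then furnishes $b_j\in X_j^{(p)}$ with $f=b_0+b_1$, so $f\in X_0^{(p)}+X_1^{(p)}$. In the other direction, if $f=a_0+a_1$ with $a_j\in X_j^{(p)}$, then the pointwise bound $\left|f\right|^p\le(\left|a_0\right|+\left|a_1\right|)^p\le 2^{p-1}(\left|a_0\right|^p+\left|a_1\right|^p)$, together with $\left|a_j\right|^p\in X_j$ and the ideal (lattice) property of $X_0+X_1$, gives $\left|f\right|^p\in X_0+X_1$. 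The only delicate point in the whole argument is the bookkeeping around disjointness — verifying that $\left|b_0+b_1\right|^p=\left|b_0\right|^p+\left|b_1\right|^p$ on the nose and that the correspondence is a genuine bijection between the two families of admissible decompositions, so that the two infima really are taken over one and the same set; once this is in place, the rest is the scalar inequality and a routine passage to the infimum.
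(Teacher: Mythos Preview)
Your argument is correct. The paper itself does not supply a proof of this claim: it states that ``the proof of Claim~\ref{claim: equi. of D(t,|f|^p,X)^(1/p) and D(t^(1/p),f,X^p)} \dots\ is an easy exercise and is left to the reader (see also \cite{Avni}).'' Your proposal fills in exactly that exercise, and the route you take---the bijection $(b_0,b_1)\leftrightarrow(\lvert b_0\rvert^{p},\lvert b_1\rvert^{p})$ between disjoint decompositions of $f$ in $(X_0^{(p)},X_1^{(p)})$ and disjoint nonnegative decompositions of $\lvert f\rvert^{p}$ in $(X_0,X_1)$, followed by the scalar inequality $(u+v)^{1/p}\le u^{1/p}+v^{1/p}\le 2^{1-1/p}(u+v)^{1/p}$---is the natural one and almost certainly what the authors intended. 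The only point worth noting explicitly (you handle it implicitly) is that every disjoint decomposition $\lvert f\rvert^{p}=a_0+a_1$ with $a_0 a_1=0$ automatically has $a_0,a_1\ge 0$, so the nonnegativity you impose on the $(c_0,c_1)$ side does not shrink the admissible set; with that observation the bijection is genuinely onto and the passage to the infimum is legitimate.
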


\begin{rem*} It has been mentioned without proof in \cite[p. 289]{CerdaCollMartin}
that the functionals $K(t,\vert f\vert^{p};X_{0},X_{1})$ and $\left(K(t^{\frac{1}{p}},f;X_{0}^{(p)},X_{1}^{(p)})\right)^{p}$
are equivalent. In fact, combining Claim \ref{claim: equi. of D(t,|f|^p,X)^(1/p) and D(t^(1/p),f,X^p)}
with (\ref{equi. of K and D}) immediately gives us 
\begin{equation}
K(t,\vert f\vert^{p};X_{0},X_{1})\leq2^{p}\left(K(t^{\frac{1}{p}},f;X_{0}^{(p)},X_{1}^{(p)})\right)^{p}\leq2^{2p}K(t,\vert f\vert^{p};X_{0},X_{1})\,.\label{eq:equi. of K(t,|f|^p,X) and (K,t^(1/p),f,X^p))^p}
\end{equation}

\end{rem*}

The proof of Claim \ref{claim: equi. of D(t,|f|^p,X)^(1/p) and D(t^(1/p),f,X^p)}
and thus of the equivalence (\ref{eq:equi. of K(t,|f|^p,X) and (K,t^(1/p),f,X^p))^p})
is an easy exercise and is left to the reader (see also \cite{Avni}).
In fact, L.~Maligranda has proved the following stronger version
of inequality (\ref{eq:equi. of K(t,|f|^p,X) and (K,t^(1/p),f,X^p))^p}).

\begin{equation}
\left(K(t,\vert f\vert^{p};X_{0},X_{1})\right)^{\frac{1}{p}}\leq K(t^{\frac{1}{p}},f;X_{0}^{(p)},X_{1}^{(p)})\leq2^{1-\frac{1}{p}}\left(K(t,\vert f\vert^{p};X_{0},X_{1})\right)^{\frac{1}{p}}\,,\label{eq:Maligranda}
\end{equation}
 and kindly shown us the proof in a private communication. His result
is mentioned without proof in \cite{Maligranda}.

The following two claims prove that $X_{0}^{(p)}+X_{1}^{(p)}$ is
a complete lattice if and only if $X_{0}$ and $X_{1}$ are complete
lattices. \begin{claim} \label{Clm: X is complete iff X^p is-1}If
$X$ is a Banach lattice of measurable functions then $X$ is a complete
lattice if and only if $X^{(p)}$ is a complete lattice. \end{claim}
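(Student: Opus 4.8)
The plan is to bypass any direct manipulation of suprema and instead exhibit a single explicit order isomorphism between the underlying posets of $X$ and $X^{(p)}$, after which the equivalence follows because the LUBP is a purely order-theoretic property and is therefore automatically transported by any such isomorphism. The key device is the (non-linear) map $\Theta:X\to X^{(p)}$ defined pointwise by $\Theta(g)=\mathrm{sgn}(g)\,|g|^{1/p}$, with candidate inverse $\Psi:X^{(p)}\to X$ given by $\Psi(f)=\mathrm{sgn}(f)\,|f|^{p}$.

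First I would check that these maps are well defined. Since $g\in X$ forces $|g|\in X$, we have $\bigl|\Theta(g)\bigr|^{p}=|g|\in X$, so $\Theta(g)\in X^{(p)}$; and since $f\in X^{(p)}$ means $|f|^{p}\in X$, the ideal (lattice) property of $X$ applied to the identity $|\Psi(f)|=|f|^{p}$ yields $\Psi(f)\in X$. A pointwise computation then shows that $\Theta\circ\Psi$ and $\Psi\circ\Theta$ are both the identity, so $\Theta$ is a bijection. This is exactly the step where the defining ideal property of a Banach lattice of measurable functions is used, and it is the only place where membership in the spaces must be argued.

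Next I would verify that $\Theta$ is an order isomorphism for the pointwise almost-everywhere order carried by both lattices. This reduces to the scalar observation that $\psi(t)=\mathrm{sgn}(t)\,|t|^{1/p}$ is a strictly increasing bijection of $\mathbb{R}$ onto itself: it is continuous, odd, and increasing on $[0,\infty)$, hence increasing on all of $\mathbb{R}$. Consequently, for each point, $g_{1}\le g_{2}$ if and only if $\psi(g_{1})\le\psi(g_{2})$, so $g_{1}\le g_{2}$ a.e.\ if and only if $\Theta(g_{1})\le\Theta(g_{2})$ a.e.; the same holds for $\Psi=\Theta^{-1}$.

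Finally I would note that the defining condition of the LUBP, namely that every nonempty subset bounded above possesses a least upper bound, refers only to the order relation and not to the linear or norm structure. An order-isomorphism therefore carries upper bounds to upper bounds and least upper bounds to least upper bounds: a bounded-above set $Q\subseteq X$ has a supremum precisely when $\Theta(Q)\subseteq X^{(p)}$ does, and in that case $\Theta(\sup Q)=\sup\Theta(Q)$. Hence $X$ has the LUBP if and only if $X^{(p)}$ does, which is the claim. I do not anticipate a genuine obstacle here; the only subtle points are the membership verifications above and the conceptual observation that, although $\Theta$ fails to be linear, this is irrelevant because completeness of a lattice is an order-theoretic notion insensitive to the linear structure.
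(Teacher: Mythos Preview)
Your argument is correct and self-contained: the map $\Theta(g)=\mathrm{sgn}(g)\,|g|^{1/p}$ is indeed a well-defined bijection between $X$ and $X^{(p)}$ which preserves and reflects the pointwise a.e.\ order, and since the LUBP is a purely order-theoretic property it transfers across any such order isomorphism.

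This is a genuinely different route from the paper's. The paper first proves an auxiliary characterization (its Claim~\ref{Claim: X has LUBP iff Linfty(Omega) has LUBP}): a Banach lattice $X$ of measurable functions has the LUBP if and only if $L^{\infty}(\Omega_{f})$ has the LUBP for every $f\in X$. It then observes that the collection of supports $\{\Omega_{f}:f\in X\}$ coincides with $\{\Omega_{g}:g\in X^{(p)}\}$, since $\Omega_{f}=\Omega_{|f|}=\Omega_{|f|^{1/p}}$; hence the two LUBP conditions reduce to the same family of statements about $L^{\infty}$ spaces. Your approach has the advantage of being direct and avoiding any detour through $L^{\infty}(\Omega_{f})$; the paper's approach, on the other hand, reuses a characterization that it develops anyway for other purposes in the appendix, so for the paper the marginal cost is essentially one line.
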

We postpone the easy proof of this claim to the appendix (see Remark
\ref{Proof that X is a complete lattice iff X^(p) is}). \begin{claim}
\label{Clm: X_0+X_1 is complete iff X_0, X_1 are-1}Assume that $X_{0}$
and $X_{1}$ are two Banach lattices defined on the same underlying
measure space. Then $X_{0}+X_{1}$ is a complete lattice if and only
if $X_{0}$ and $X_{1}$ are complete lattices. \end{claim}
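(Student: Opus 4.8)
The plan is to reduce the lattice completeness of each of the three spaces to a single pointwise statement about essential suprema, and then to transfer such statements between $X_{0}$, $X_{1}$ and $X_{0}+X_{1}$ using the fact that all three carry the same (pointwise a.e.) order and are order ideals in $L^{0}(\Omega,\Sigma,\mu)$. The reformulation I would record first, valid for any Banach lattice $X$ of measurable functions, is this: $X$ has the LUBP if and only if $\operatorname{ess\,sup}Q\in X$ for every $Q\subseteq X$ that is bounded above by some element of $X$, in which case $\operatorname{ess\,sup}Q$ is the least upper bound of $Q$ in $X$. The only nontrivial point is that the a priori merely measurable function $s=\operatorname{ess\,sup}Q$ lies in $X$: fixing any $q_{0}\in Q$ and any upper bound $x\in X$, one has $q_{0}\le s\le x$ a.e., hence $0\le s-q_{0}\le x-q_{0}\in X$, so the ideal property gives $s-q_{0}\in X$ and therefore $s\in X$. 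Conversely a lattice least upper bound, when it exists, must coincide with $\operatorname{ess\,sup}Q$, since the latter is by definition the smallest measurable majorant of $Q$.

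With this reformulation the implication ``$X_{0}+X_{1}$ complete $\Rightarrow$ $X_{0}$ and $X_{1}$ complete'' is immediate and symmetric. Given $Q\subseteq X_{0}$ bounded above by some $x\in X_{0}\subseteq X_{0}+X_{1}$, completeness of $X_{0}+X_{1}$ yields $s=\operatorname{ess\,sup}Q\in X_{0}+X_{1}$ with $s\le x$ a.e.; the same ideal-sandwich $q_{0}\le s\le x$ with $q_{0},x\in X_{0}$ now forces $s\in X_{0}$, and $s$ is then the least upper bound of $Q$ in $X_{0}$ (it dominates every $q$, and it is below any upper bound in $X_{0}$ because such a bound is a measurable majorant). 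The argument for $X_{1}$ is identical.

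For the converse implication I would proceed as follows. Let $Q\subseteq X_{0}+X_{1}$ be bounded above; replacing $Q$ by the upward-directed family of its finite suprema and subtracting a fixed element, I may assume $Q$ is an upward-directed family $\{r_{i}\}$ with $0\le r_{i}\le w$ for some $w\ge0$ in $X_{0}+X_{1}$. A standard decomposition lemma lets me write $w=w_{0}+w_{1}$ with $0\le w_{j}\in X_{j}$: from any decomposition $w=u_{0}+u_{1}$ take $w_{0}=u_{0}^{+}\wedge w$ (so $0\le w_{0}\le u_{0}^{+}\in X_{0}$) and $w_{1}=w-w_{0}=(w-u_{0}^{+})^{+}\le u_{1}^{+}\in X_{1}$. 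Now split each $r_{i}$ as $r_{i}=(r_{i}\wedge w_{0})+(r_{i}-w_{0})^{+}$; the first summand lies in $[0,w_{0}]\subseteq X_{0}$ and the second in $[0,w_{1}]\subseteq X_{1}$, and since both operations $\,\cdot\wedge w_{0}\,$ and $(\,\cdot-w_{0})^{+}$ are order preserving, the two resulting families remain upward directed. Completeness of $X_{0}$ and $X_{1}$ then produces $s_{0}=\operatorname{ess\,sup}_{i}(r_{i}\wedge w_{0})\in X_{0}$ and $s_{1}=\operatorname{ess\,sup}_{i}(r_{i}-w_{0})^{+}\in X_{1}$. Passing to a countable increasing subfamily cofinal for $\{r_{i}\}$, and hence simultaneously for both split families, additivity of suprema along increasing sequences gives $\operatorname{ess\,sup}_{i}r_{i}=s_{0}+s_{1}\in X_{0}+X_{1}$; undoing the two reductions shows that every bounded-above subset of $X_{0}+X_{1}$ has its essential supremum in $X_{0}+X_{1}$, so $X_{0}+X_{1}$ is complete.

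I expect this last implication to be the main obstacle, and within it two points require care: the decomposition of a nonnegative element of $X_{0}+X_{1}$ into nonnegative pieces in $X_{0}$ and $X_{1}$, and the reduction from an arbitrary bounded-above family to a countable increasing one so that the essential suprema of the two split families add to that of $\{r_{i}\}$. The latter, namely that the essential supremum of an upward-directed family is attained along an increasing sequence, is the measure-theoretic heart of the argument and is precisely where Dedekind completeness of the factors is converted into Dedekind completeness of the sum.
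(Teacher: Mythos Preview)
Your reformulation via essential suprema is sound, and the implication ``$X_{0}+X_{1}$ complete $\Rightarrow$ $X_{0},X_{1}$ complete'' matches the paper's argument almost verbatim. The problem is in the converse, at precisely the point you single out as delicate: the passage to a countable increasing subfamily cofinal for $\{r_{i}\}$ is \emph{not} available in general. Take $X_{0}=X_{1}=\ell^{\infty}(\Gamma)$ with $\Gamma$ uncountable; this lattice is Dedekind complete, yet the upward-directed bounded family $\{\chi_{F}:F\subseteq\Gamma\ \text{finite}\}$ admits no countable cofinal subset, and its supremum $\chi_{\Gamma}$ is not the pointwise supremum of any countable subfamily. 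So ``additivity of suprema along increasing sequences'' cannot be invoked as you propose, and the paragraph as written has a genuine gap.

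The gap is, however, easily closed without any countability. Once $s_{0}$ and $s_{1}$ exist, $s_{0}+s_{1}$ is visibly an upper bound of $\{r_{i}\}$. For minimality, let $z\ge0$ be any upper bound of $\{r_{i}\}$ in $X_{0}+X_{1}$. Because $t\mapsto t\wedge w_{0}$ and $t\mapsto(t-w_{0})^{+}$ are order-preserving, $z\wedge w_{0}$ majorises every $r_{i}\wedge w_{0}$ and $(z-w_{0})^{+}$ majorises every $(r_{i}-w_{0})^{+}$; hence $s_{0}\le z\wedge w_{0}$ and $s_{1}\le(z-w_{0})^{+}$ (here you use that $s_{0},s_{1}$ are least \emph{measurable} majorants, which your reformulation gives), and adding yields $s_{0}+s_{1}\le(z\wedge w_{0})+(z-w_{0})^{+}=z$. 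Thus $s_{0}+s_{1}$ is the least upper bound, and no cofinal sequence is needed.

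For comparison, the paper's proof sidesteps the issue entirely by using a \emph{disjoint-support} decomposition rather than your ``vertical'' one. From the equivalence of the $K$- and $D$-functionals it chooses a measurable set $E$ with $f\chi_{E}\in X_{0}$ and $f\chi_{\Omega\setminus E}\in X_{1}$ for the upper bound $f$, and then takes least upper bounds of $\{a\chi_{E}:a\in A\}$ in $X_{0}$ and of $\{a\chi_{\Omega\setminus E}:a\in A\}$ in $X_{1}$. Because the two families live on disjoint sets, the sum $b_{0}+b_{1}$ of the two suprema is transparently the supremum of $A$, with no further verification needed. Your splitting $r_{i}=(r_{i}\wedge w_{0})+(r_{i}-w_{0})^{+}$ works too once the minimality argument above is supplied, but the disjoint decomposition is what lets the paper dismiss that step as ``easy to verify''.
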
 Here
again we refer the reader to the appendix for a proof of this claim.

We now turn to the proof of Theorem \ref{thm:a banach couple remains a banach couple}.

\noindent \textit{Proof.} We start by assuming that $f,g\in X_{0}^{(p)}+X_{1}^{(p)}$
and that $K(t,g;X_{0}^{(p)},X_{1}^{(p)})\leq K(t,f;X_{0}^{(p)},X_{1}^{(p)})$
for every $t>0$. We need to prove that there exists a linear operator
$L:(X_{0}^{(p)},X_{1}^{(p)})\rightarrow(X_{0}^{(p)},X_{1}^{(p)})$
such that $Lf=g$.

It follows from (\ref{eq:equi. of K(t,|f|^p,X) and (K,t^(1/p),f,X^p))^p})
or from (\ref{eq:Maligranda}) that there is a constant $\alpha>0$
such that 
\[
K(t,\vert g\vert^{p};X_{0},X_{1})\leq K(t,\alpha\vert f\vert^{p};X_{0},X_{1})
\]
 for all $t>0$.

According to our assumption, since $(X_{0},X_{1})$ is a positive
Calder\'on couple, there exists a bounded linear positive operator $T:(X_{0},X_{1})\rightarrow(X_{0},X_{1})$
such that $T(\alpha\vert f\vert^{p})=\vert g\vert^{p}$. If, furthermore,
$\left(X_{0},X_{1}\right)$ is a positive $C$-Calder\'on couple then
we can also assert that 
\begin{equation}
\left\Vert T\right\Vert _{X_{j}\to X_{j}}\le C\mbox{ for }j=0,1\,.\label{eq:temp}
\end{equation}

Let us now define $H:X_{0}^{(p)}+X_{1}^{(p)}\rightarrow X_{0}^{(p)}+X_{1}^{(p)}$
by setting 
\[
H(h)=\left(T(\alpha\vert h\vert^{p})\right)^{\frac{1}{p}}
\]
 for every $h\in X_{0}^{(p)}+X_{1}^{(p)}$ (Since $T$ is positive
and $\vert h\vert^{p}\geq0$, the expression $\left(T(\alpha\vert h\vert^{p})\right)^{\frac{1}{p}}$
is meaningful).

According to Claim \ref{claim: equi. of D(t,|f|^p,X)^(1/p) and D(t^(1/p),f,X^p)},
it is obvious that $H(h)\in X_{0}^{(p)}+X_{1}^{(p)}$.

Then we observe that 
\begin{eqnarray*}
H(f) & = & \left(T(\alpha\vert f\vert^{p})\right)^{\frac{1}{p}}\\
 & = & \vert g\vert^{p\cdot\frac{1}{p}}\\
 & = & \vert g\vert\,.
\end{eqnarray*}
 It is easy to check that $H$ is sublinear, that is: 
\begin{itemize}
\item For every $\lambda\in\mathbb{R}$ we have 
\begin{equation}
H(\lambda h)=\vert\lambda\vert H(h)\,.\label{eq:H(x_h)equals0003Dx_H(h)}
\end{equation}

\item For every $h_{1},h_{2}\in X_{0}^{(p)}+X_{1}^{(p)}$ we have 
\begin{equation}
H(h_{1}+h_{2})\leq H(h_{1})+H(h_{2})\,.\label{eq:H(h_1+h_2) Lequals0003D H(h_1)+H(h_2)}
\end{equation}

\end{itemize}
We will need the sublinearity of $H$ in order to apply Theorem \ref{thm:LUBP implies HBEP}
in a moment.

\smallskip{}

(\ref{eq:H(x_h)equals0003Dx_H(h)}) is immediate and (\ref{eq:H(h_1+h_2) Lequals0003D H(h_1)+H(h_2)})
follows from Proposition \ref{mikowski} and the fact that $T$ is
positive and linear.

We now define $l:\mbox{Span}\{f\}\rightarrow X_{0}^{(p)}+X_{1}^{(p)}$
by setting 
\[
l(\lambda f)=\lambda g
\]
for all $\lambda\in\mathbb{R}$.

We obviously have

\[
\vert l(\lambda f)\vert=\vert\lambda g\vert=\vert\lambda\vert H(f)=H(\lambda f)
\]
for every $\lambda\in\mathbb{R}$.

Since we assume $X_{0}$ and $X_{1}$ are complete lattices, Claim
\ref{Clm: X is complete iff X^p is-1} and Claim \ref{Clm: X_0+X_1 is complete iff X_0, X_1 are-1}
imply that $X_{0}^{(p)}+X_{1}^{(p)}$ is a complete lattice too. Since
$H$ is sublinear, Theorem \ref{thm:LUBP implies HBEP} guarantees
the existence of a linear operator $L:X_{0}^{(p)}+X_{1}^{(p)}\rightarrow X_{0}^{(p)}+X_{1}^{(p)}$
that extends $l$ and for which 
\[
\vert L(h)\vert\leq H(h)
\]
for every $h\in X_{0}^{(p)}+X_{1}^{(p)}$.

Note that $L(f)=g$.

Finally, to complete the proof of Theorem \ref{thm:a banach couple remains a banach couple},
we will show that the restriction of $L$ to $X_{j}^{(p)}$ (for $j=0,1$)
is a bounded linear operator into $X_{j}^{(p)}$ and estimate its
norm.

Let us therefore assume $h\in X_{j}^{(p)}$. We may write 
\begin{eqnarray*}
\left|L(h)\right| & \leq & H(h)\\
 & = & \left(T(\alpha\vert h\vert^{p})\right)^{\frac{1}{p}}\,.
\end{eqnarray*}

Since $h\in X_{j}^{(p)}$, it is also true that $\vert h\vert^{p}\in X_{j}$,
and thus $T(\alpha\vert h\vert^{p})\in X_{j}$. It follows from the
definition of $X_{j}^{(p)}$ that $\left(T(\alpha\vert h\vert^{p})\right)^{\frac{1}{p}}\in X_{j}^{(p)}$,
and so $L(h)\in X_{j}^{(p)}$ 
by
the lattice property.

Furthermore, 
\begin{eqnarray}
\Vert L(h)\Vert_{X_{j}^{(p)}} & \leq & \left\Vert H(h)\right\Vert _{X_{j}^{(p)}}\nonumber \\
 & = & \left\Vert \left(T(\alpha\vert h\vert^{p})\right)^{\frac{1}{p}}\right\Vert _{X_{j}^{(p)}}\nonumber \\
 & = & \left(\left\Vert \left|\left(T(\alpha\vert h\vert^{p})\right)^{\frac{1}{p}}\right|^{p}\right\Vert _{X_{j}}\right)^{\frac{1}{p}}\nonumber \\
 & = & \left(\left\Vert T(\alpha\vert h\vert^{p})\right\Vert _{X_{j}}\right)^{\frac{1}{p}}\label{eq:estimating the norm of L(h)}\\
 & \leq & \left(\Vert T\Vert_{X_{j}\rightarrow X_{j}}\cdot\left\Vert \alpha\vert h\vert^{p}\right\Vert _{X_{j}}\right)^{\frac{1}{p}}\nonumber \\
 & = & \alpha^{\frac{1}{p}}\left(\Vert T\Vert_{X_{j}\rightarrow X_{j}}\right)^{\frac{1}{p}}\cdot\left(\left\Vert \vert h\vert^{p}\right\Vert _{X_{j}}\right)^{\frac{1}{p}}\nonumber \\
 & = & \alpha^{\frac{1}{p}}\left(\Vert T\Vert_{X_{j}\rightarrow X_{j}}\right)^{\frac{1}{p}}\left\Vert h\right\Vert _{X_{j}^{(p)}}\nonumber 
\end{eqnarray}
which proves that $L:(X_{0}^{(p)},X_{1}^{(p)})\rightarrow(X_{0}^{(p)},X_{1}^{(p)})$
is bounded. In addition, if $\left(X_{0},X_{1}\right)$ is a positive
$C$-Calder\'on couple, the preceding estimates and (\ref{eq:temp})
show that $\left\Vert L\right\Vert _{X_{j}^{(p)}\to X_{j}^{(p)}}\le\alpha^{\frac{1}{p}}C^{\frac{1}{p}}$
and therefore that $(X_{0}^{(p)},X_{1}^{(p)})$ is a $\left(\alpha C\right)^{^{\frac{1}{p}}}$-Calder\'on
couple. According to (\ref{eq:Maligranda}), we can set $\alpha=2^{p-1}$.
This completes the proof of Theorem \ref{thm:a banach couple remains a banach couple}.
$\qed$

\begin{rem*} The interplay which has served us here, between sublinear
and linear operators enabled by an appropriate version of the Hahn-Banach
extension property, has also been used elsewhere in interpolation
theory, in particular to show that certain interpolation theorems
which are valid for linear operators also hold for sublinear operators.
For example the theorem of Janson on p.~52 of \cite{jasl} deals
with the case of sublinear operators mapping into couples of $L^{p}$
spaces. That theorem has been extended to the case of other couples
of Banach or quasi-Banach lattices by Bukhvalov \cite{Bukhvalov}
and Masty{\l{}}o \cite[Theorem 4.2, p. 416]{Mastylo}. \end{rem*}
\bigskip{}

\section{APPENDIX}

In this section we identify several conditions which ensure that a
given Banach lattice of measurable functions is also a complete lattice.
We are quite sure that several (and maybe even all) of the results
in this section are already known. However, since we have not found
references for them thus far, we add them and their proofs here for
completeness. We invite the reader to inform us of any relevant literature.
\begin{lem}
\label{lemma: A separable Banach lattice is a complete lattice}A
separable Banach lattice of measurable functions is a complete lattice. 
\end{lem}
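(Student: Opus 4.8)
The plan is to produce the least upper bound of an arbitrary nonempty subset $Q\subseteq X$ that is bounded above by some $x\in X$, by approximating $Q$ with a countable subfamily and then taking an ordinary pointwise supremum of an increasing sequence. First I would exploit separability: since $X$ is a separable metric space, its subset $Q$ is itself separable, so there is a countable set $Q_{0}=\{q_{n}:n\ge1\}\subseteq Q$ which is norm-dense in $Q$. Put $y_{n}:=q_{1}\vee\cdots\vee q_{n}$. Then $(y_{n})$ is an increasing sequence in $X$ with $q_{1}\le y_{n}\le x$ almost everywhere, since each $q_{i}\le x$.

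The candidate for $\sup Q$ is the pointwise limit $y(\omega):=\lim_{n\to\infty}y_{n}(\omega)=\sup_{n}y_{n}(\omega)$. Because the $y_{n}$ increase and are dominated by the finite-a.e.~function $x$, this limit exists and is finite a.e., and $y$ is measurable as a pointwise limit of measurable functions, with $q_{1}\le y\le x$ a.e. To see that $y\in X$, I would note that $0\le y-q_{1}\le x-q_{1}$ a.e. with $x-q_{1}\in X$, so the ideal (``solidity'') property in the definition of a Banach lattice of measurable functions forces $y-q_{1}\in X$, whence $y\in X$. Note that no $\sigma$-finiteness of $(\Omega,\Sigma,\mu)$ is invoked here, since only a countable increasing supremum is involved rather than an essential supremum of an arbitrary family.

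It then remains to verify that $y$ is an upper bound for all of $Q$ and is the least such. The ``least'' part is immediate: if $z\in X$ satisfies $q\le z$ a.e. for every $q\in Q$, then in particular $y_{n}\le z$ a.e. for every $n$ (a finite join of elements $\le z$), and taking the pointwise supremum over the countably many $n$ yields $y\le z$ a.e. The only genuinely delicate step, which I expect to be the main obstacle, is checking $q\le y$ for an \emph{arbitrary} $q\in Q$ and not merely for $q\in Q_{0}$: here one cannot appeal to a.e.~convergence of a norm-convergent subsequence, since norm convergence in a general Banach lattice of measurable functions need not entail convergence almost everywhere. Instead I would argue purely through the norm, using the monotonicity of the positive-part operation and the lattice property of the norm. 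Choosing $q_{n_{k}}\to q$ in norm and using $q_{n_{k}}\le y_{n_{k}}\le y$, one gets $(q-y)_{+}\le(q-q_{n_{k}})_{+}\le|q-q_{n_{k}}|$, hence $\|(q-y)_{+}\|\le\|q-q_{n_{k}}\|\to0$, so $(q-y)_{+}=0$, i.e. $q\le y$. This replacement of a.e.~convergence by a norm estimate is exactly the point where the argument must scrupulously avoid assuming any order-continuity of the norm, and it is what makes separability (rather than mere norm-completeness) do the work.
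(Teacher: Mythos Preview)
Your proof is correct and follows essentially the same strategy as the paper's: pick a countable norm-dense subset of $Q$, form the pointwise supremum $y$, check $y\in X$ via the lattice (ideal) property, and verify that $y$ is the least upper bound. The one substantive difference is in the step showing that $y$ dominates an \emph{arbitrary} $q\in Q$. The paper handles this by invoking the standard fact (citing Zaanen) that in a Banach lattice of measurable functions, norm convergence implies almost-everywhere convergence along a subsequence, so that $q=\lim_{k}e_{m_{k}}\le\sup_{n}d_{n}=y$ a.e. Your assertion that ``one cannot appeal to a.e.\ convergence of a norm-convergent subsequence'' is therefore not quite accurate in this setting: the implication \emph{does} hold here (choose $\sum_{k}\|f_{n_{k}}-f\|<\infty$; the increasing partial sums $\sum_{k\le N}|f_{n_{k}}-f|$ are norm-Cauchy, their norm limit in $X$ dominates all of them, hence the series is finite a.e.). That said, your alternative route via the estimate $(q-y)_{+}\le|q-q_{n_{k}}|$ and $\|(q-y)_{+}\|\to0$ is perfectly valid and has the merit of being entirely self-contained, avoiding any appeal to the subsequence-a.e.\ fact.
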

\begin{proof} Suppose $X$ is a Banach lattice of measurable functions,
and that $Q\subseteq X$ is bounded from above by, say, $x\in X$.
Since $X$ is separable, there is a countable set $D$ such that $D\subseteq Q\subseteq\overline{D}$.
Let us write $D=\left\{ d_{n}\right\} _{n\in\mathbb{N}}$ and define
$y$ as the pointwise supremum of the elements of $D$. That is, for
every $\omega\in\Omega$ we define $y(\omega)=\mbox{sup}_{n\in\mathbb{N}}\left\{ d_{n}(\omega)\right\} $.

First we note that $y$ is an element of $X$: Indeed, $y$ is a measurable
function, as a pointwise supremum of a countable collection
of
measurable functions. In addition, since 
$d_1
\leq y\leq x$, the lattice property
guarantees that $y\in X$.

Secondly we show that $y$ is an upper bound of $Q$: If $q\in Q$
then there is a sequence $\left(e_{m}\right)_{m\in\mathbb{N}}$ of
elements of $D$ such that 
\begin{equation}
\mbox{lim}_{m\rightarrow\infty}\left\Vert q-e_{m}\right\Vert =0\,.\label{eq:Conv_in_norm}
\end{equation}
Since $X$ is a Banach lattice of measurable functions, (\ref{eq:Conv_in_norm})
implies that there exists a subsequence $\left(e_{m_{k}}\right)_{k\in\mathbb{N}}$
such that $e_{m_{k}}\underset{k\rightarrow\infty}{\rightarrow}q$
almost everywhere. (The proof of this standard fact can be seen, e.g.
as one of the steps in the proof of Theorem 2 in \cite{Zaanen}, Chapter
15, Section 64, p. 445.) In other words, there is a measurable set
$B\in\Sigma$ such that $\mu(B)=0$ and $\mbox{lim}_{k\rightarrow\infty}e_{m_{k}}(\omega)=q(\omega)$
for every $\omega\in\Omega\setminus B$. Clearly, $q\leq\mbox{sup}_{k\in\mathbb{N}}\left\{ e_{m_{k}}\right\} \leq\mbox{sup}_{n\in\mathbb{N}}\left\{ d_{n}\right\} =y$
(almost everywhere), as required.

Thirdly, we show that if $z\in X$ is an upper bound of $Q$ then
$y\leq z$: This is almost trivial, since if $z$ is an upper bound
of $Q$ then $d_{n}\leq z$ for every $n\in\mathbb{N}$, hence $y=\mbox{sup}_{n\in\mathbb{N}}\left\{ d_{n}\right\} \leq z$.
\end{proof}

\begin{claim} Let $X$ be a Banach lattice of real measurable functions
on a measure space $\left(\Omega,\Sigma,\mu\right)$. If $\left(\Omega,\Sigma,\mu\right)$
is $\sigma$-finite, then $X$ is a complete lattice.\end{claim}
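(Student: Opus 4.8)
The plan is to construct the least upper bound as an \emph{essential supremum}, since the pointwise supremum of an uncountable family of measurable functions need not even be measurable. Note first that we cannot simply invoke Lemma \ref{lemma: A separable Banach lattice is a complete lattice}, because a $\sigma$-finite Banach lattice need not be separable (for instance $L_{\infty}[0,1]$ is $\sigma$-finite and complete but not separable). So let $Q\subseteq X$ be nonempty and bounded above by $x\in X$. Since for $a,b\in X$ one has $\max\{a,b\}=\tfrac12\left(a+b+\vert a-b\vert\right)\in X$, the family $\mathcal G$ of all finite maxima $\max\{q_{1},\dots,q_{n}\}$ with $q_{i}\in Q$ is contained in $X$, is directed upward, is still bounded above by $x$, and has exactly the same upper bounds as $Q$; it therefore suffices to produce a least upper bound for $\mathcal G$.

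Next I would use the $\sigma$-finiteness to fix a strictly positive weight $w\in L_{1}(\mu)$: writing $\Omega=\bigcup_{n}\Omega_{n}$ with the $\Omega_{n}$ disjoint and $\mu(\Omega_{n})<\infty$, set $w=\sum_{n}2^{-n}(1+\mu(\Omega_{n}))^{-1}\mathbf 1_{\Omega_{n}}$. Composing with the bounded strictly increasing function $\arctan$ turns the order into a finite scalar functional: put $\beta=\sup_{g\in\mathcal G}\int_{\Omega}\arctan(g)\,w\,d\mu$, which is finite because $\vert\arctan\vert\le\pi/2$ and $w\in L_{1}$. Choosing $g_{n}\in\mathcal G$ with $\int\arctan(g_{n})\,w\,d\mu\to\beta$ and replacing $g_{n}$ by $\max\{g_{1},\dots,g_{n}\}$ (still in $\mathcal G$), I may assume the $g_{n}$ increase pointwise to the measurable function $y:=\sup_{n}g_{n}$. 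Since $g_{1}\le y\le x$ with $g_{1},x\in X$, the lattice property gives $y\in X$, and dominated convergence yields $\int\arctan(y)\,w\,d\mu=\beta$.

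The heart of the argument, and the step I expect to be the main obstacle, is showing that $y$ is an upper bound of $\mathcal G$. Fix $g\in\mathcal G$. Then $\max\{g,g_{n}\}\in\mathcal G$ increases pointwise to $\max\{g,y\}$, and since $\int\arctan(g_{n})\,w\,d\mu\le\int\arctan(\max\{g,g_{n}\})\,w\,d\mu\le\beta$ with the left side tending to $\beta$, dominated convergence gives
\[
\int_{\Omega}\arctan(\max\{g,y\})\,w\,d\mu=\lim_{n\to\infty}\int_{\Omega}\arctan(\max\{g,g_{n}\})\,w\,d\mu=\beta=\int_{\Omega}\arctan(y)\,w\,d\mu.
\]
Hence $\int_{\Omega}\left[\arctan(\max\{g,y\})-\arctan(y)\right]w\,d\mu=0$; since the integrand is nonnegative (as $\max\{g,y\}\ge y$ and $\arctan$ is increasing) and $w>0$ a.e., we conclude $\arctan(\max\{g,y\})=\arctan(y)$ a.e., and then $g\le y$ a.e.\ by strict monotonicity of $\arctan$. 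Thus $y$ dominates every element of $\mathcal G$, hence every element of $Q$.

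Finally, $y$ is the \emph{least} upper bound: if $z\in X$ satisfies $q\le z$ for all $q\in Q$, then $z$ is an upper bound of $\mathcal G$, so $g_{n}\le z$ for every $n$ and therefore $y=\sup_{n}g_{n}\le z$. This shows that $X$ has the LUBP, i.e.\ is a complete lattice. The essential use of $\sigma$-finiteness is precisely in the existence of the strictly positive integrable weight $w$, without which both the scalarization step producing $\beta$ and the concluding a.e.\ argument break down.
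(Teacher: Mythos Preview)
Your proof is correct. It is, in essence, a direct construction of the essential supremum of the family $Q$ (via a maximizing sequence for the functional $g\mapsto\int_\Omega\arctan(g)\,w\,d\mu$), followed by the observation that this essential supremum lies in $X$ because it is sandwiched between two elements of $X$.

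The paper's proof is a one-liner: it simply invokes the classical fact (citing Doob, \textit{Measure Theory}, Chapter V, \S18) that on a $\sigma$-finite measure space every order-bounded family of measurable functions possesses an essential supremum, and then notes that the lattice property of $X$ forces this essential supremum to belong to $X$. So the underlying idea is identical; the difference is that the paper outsources the existence of the essential supremum to a reference, while you prove it from scratch. Your argument is therefore more self-contained and makes the role of $\sigma$-finiteness completely explicit (through the weight $w$), at the cost of being considerably longer than the paper's two-sentence proof.
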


\begin{proof} The easy proof of this claim follows from the fact
that any collection of measurable functions which is bounded from
above has a unique least upper bound (to within a set of measure zero)
when the underlying measure space is $\sigma$-finite (see \cite{Doob},
Chapter V, Section 18, pp.~71-72). \end{proof}

The following simple result will be helpful for dealing with complete
lattices.

\begin{claim} \label{Clm:NonNegative}A Banach lattice $X$ is a
complete lattice if and only if every subset of non-negative functions
in $X$ which has an upper bound also has a least upper bound.\end{claim}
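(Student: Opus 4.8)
The plan is to prove the two implications separately. The forward direction---if $X$ is a complete lattice then every subset of non-negative functions with an upper bound has a least upper bound---is immediate, since such a subset is in particular a subset of $X$ that is bounded from above, so the LUBP applies to it directly. All the real work lies in the converse.

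For the converse, suppose every subset of non-negative functions in $X$ that is bounded from above has a least upper bound, and let $Q\subseteq X$ be an arbitrary (nonempty) subset bounded from above by some $x\in X$. The idea is to reduce to the non-negative case by translating and truncating $Q$ so that it sits above $0$. First I would fix an element $q_0\in Q$ and consider the shifted, truncated family $Q^{+}=\left\{ (q-q_0)^{+}:q\in Q\right\}$, where $(\cdot)^{+}$ denotes the positive part. Each member of $Q^{+}$ is non-negative; moreover it lies in $X$, because $q-q_0\in X$ and $\left|(q-q_0)^{+}\right|\le\left|q-q_0\right|$, so the lattice property applies. Since $x$ dominates every $q\in Q$ and, in particular, $x\ge q_0$ gives $x-q_0\ge 0$, monotonicity of the positive part yields $(q-q_0)^{+}\le (x-q_0)^{+}=x-q_0$, so $Q^{+}$ is bounded above by $x-q_0\in X$.

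By hypothesis $Q^{+}$ then has a least upper bound $y^{+}\in X$, and I claim that $y:=y^{+}+q_0$ is the least upper bound of $Q$. That $y$ is an upper bound follows from $q\le q_0+(q-q_0)^{+}\le q_0+y^{+}=y$ for every $q\in Q$. For minimality, if $z\in X$ satisfies $z\ge q$ for all $q\in Q$, then in particular $z\ge q_0$, so $z-q_0\ge 0$; combined with $z-q_0\ge q-q_0$ this gives $z-q_0\ge (q-q_0)^{+}$ for every $q$, i.e. $z-q_0$ is an upper bound of $Q^{+}$. Hence $z-q_0\ge y^{+}$ and therefore $z\ge y$. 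This shows $y$ is the least upper bound of $Q$, completing the converse.

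The only delicate point---and the step I expect to be the main obstacle---is the minimality argument, where one must check that an \emph{arbitrary} upper bound $z$ of $Q$ really dominates the truncated family $Q^{+}$, and not merely the untruncated differences $q-q_0$. This is exactly where the observation $z\ge q_0$ (hence $z-q_0\ge 0$) is needed, in order to pass from $z-q_0\ge q-q_0$ to $z-q_0\ge (q-q_0)^{+}$. Everything else is routine verification using the lattice operations and the fact that $X$ is closed under them.
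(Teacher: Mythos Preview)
Your proof is correct and follows essentially the same approach as the paper: the paper fixes $g_0\in A$ and passes to the set $B=\{\max\{g,g_0\}-g_0:g\in A\}$, which is precisely your $Q^{+}=\{(q-q_0)^{+}:q\in Q\}$ written in different notation. The verification that least upper bounds correspond under the shift by $q_0$ is the same in both arguments.
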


\begin{proof} Let $A$ be a subset of $X$. Let $g_{0}$ be some
element in $A$. Then, obviously, an element $f\in X$ is an upper
bound of $A$ if and only if $f$ is an upper bound of $A_{0}:=\left\{ \max\left\{ g,g_{0}\right\} :g\in A\right\} $.
It can also be readily seen that the element $f$ is a least upper
bound of $A$ if and only if this same element is a least upper bound
of $A_{0}$. The set $B:=\left\{ g-g_{0}:g\in A_{0}\right\} =\left\{ \max\left\{ g,g_{0}\right\} -g_{0}:g\in A\right\} $
consists of non-negative elements of $X$ and has an upper bound if
and only if $A_{0}$ has an upper bound. Furthermore $f$ is a least
upper bound of $A_{0}$ if and only if $f-g_{0}$ is a least upper
bound of $B$. Claim \ref{Clm:NonNegative} is an obvious consequence
of the preceding observations. \end{proof}

For a given Banach lattice $X$ of measurable functions on some measure
space $\left(\Omega,\Sigma,\mu\right)$ let us use the notation 
$\Omega_{f}:=\left\{ \omega\in\Omega
\,\vert \,
f(\omega)\ne0\right\} $
for the support of an element $f\in X$. Then let $L^{\infty}(\Omega_{f})$
denote the subspace of $L^{\infty}\left(\Omega,\Sigma,\mu\right)$
consisting of all essentially bounded functions that vanish on $\Omega\setminus\Omega_{f}$.
Perhaps a somewhat easier way to characterize a complete lattice is
the following:

\begin{claim} \label{Claim: X has LUBP iff Linfty(Omega) has LUBP}Suppose
$X$ is a Banach lattice of measurable functions. $X$ has the LUBP
if and only if $L^{\infty}(\Omega_{f})$ has the LUBP for all $f\in X$.\end{claim}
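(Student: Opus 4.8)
The plan is to reduce the two-sided statement to a single structural fact: for each $f\in X$ the principal ideal
\[
X_{f}:=\left\{ h\in X:\left|h\right|\leq\lambda\left|f\right|\text{ for some }\lambda\geq0\right\}
\]
is order-isomorphic to $L^{\infty}(\Omega_{f})$. The isomorphism is $g\mapsto g\left|f\right|$, with inverse $h\mapsto h/\left|f\right|$ (read as $0$ off $\Omega_{f}$); it is well defined because $h\in X_{f}$ forces $h$ to vanish off $\Omega_{f}$ and to be an a.e.\ bounded multiple of $\left|f\right|$ on $\Omega_{f}$, while conversely $\left|g\left|f\right|\right|\leq\left\Vert g\right\Vert _{\infty}\left|f\right|$ puts $g\left|f\right|$ in $X_{f}$ by the lattice property. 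Since $\left|f\right|>0$ on $\Omega_{f}$, multiplication by $\left|f\right|$ both preserves and reflects the pointwise order, so this bijection is an order isomorphism; consequently $X_{f}$ has the LUBP (as an ordered set) if and only if $L^{\infty}(\Omega_{f})$ does. The whole claim then becomes: $X$ has the LUBP iff every principal ideal $X_{f}$ does.

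For the forward direction I would observe that principal ideals inherit the LUBP. Given $S\subseteq X_{f}$ bounded above in $X_{f}$ by some $b$, the LUBP of $X$ furnishes a supremum $y$ in $X$; pinning $y$ between a fixed $s_{0}\in S$ and $b$ gives $0\leq y-s_{0}\leq b-s_{0}$, an a.e.\ bounded multiple of $\left|f\right|$, so $y\in X_{f}$ and $y$ is already the supremum inside $X_{f}$. For the converse I would first invoke Claim \ref{Clm:NonNegative} to restrict attention to a nonnegative set $Q$ bounded above by some $x$, which may be taken $\geq0$. Each $q\in Q$ then satisfies $0\leq q\leq x$, hence $q\in X_{x}$, so $Q$ lies in the single ideal $X_{x}$, and the hypothesis (applied to $f=x$) gives a supremum $y$ of $Q$ in $X_{x}$.

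The main obstacle is the final verification that this $y$ is the supremum of $Q$ in all of $X$, not merely in the ideal $X_{x}$. The difficulty is that an upper bound $z\in X$ of $Q$ need not lie in $X_{x}$, so $y\leq z$ cannot be read off directly. I would resolve this with the truncation $z\wedge x$: from $q\leq z$ and $q\leq x$ one gets $q\leq z\wedge x$ for every $q\in Q$, and $0\leq z\wedge x\leq x$ places $z\wedge x$ in $X_{x}$, whence the least-upper-bound property of $y$ inside $X_{x}$ yields $y\leq z\wedge x\leq z$. Claim \ref{Clm:NonNegative} then upgrades this to the full LUBP of $X$. The only other points needing care are routine: that $h/\left|f\right|$ is a genuine element of $L^{\infty}(\Omega_{f})$ (essential boundedness is precisely the ideal condition), and that supports and quotients are all handled up to null sets.
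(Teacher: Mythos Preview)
Your proof is correct and takes essentially the same approach as the paper: both directions rest on the multiplication map $g\mapsto g\left|f\right|$ and its inverse $h\mapsto \chi_{\Omega_f}\,h/\left|f\right|$ to pass between $L^{\infty}(\Omega_{f})$ and (what you package as) the principal ideal $X_{f}$, together with the non-negative reduction of Claim~\ref{Clm:NonNegative}. The truncation step $z\mapsto z\wedge x$ that you isolate as the ``main obstacle'' is exactly the verification the paper suppresses under ``it is a trivial matter to check,'' so your write-up is in fact the more careful of the two.
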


\begin{proof} Suppose first that\textbf{ }$X$ has the LUBP. Given
an arbitrary element $f_{0}\in X$ we have to show that $L^{\infty}(\Omega_{f_{0}})$
has the LUBP. Obviously, since $\Omega_{f}=\Omega_{\left|f\right|}$
we may assume without loss of generality that $f_{0}$ is non-negative.
Let $A$ be an arbitrary subset of non-negative elements of $L^{\infty}(\Omega_{f_{0}})$
which is bounded above by some element $g_{0}\in L^{\infty}(\Omega_{f_{0}})$.
Let $B=\left\{ uf_{0}\vert u\in A\right\} $ (where $uf_{0}$ denotes
pointwise multiplication a.e. of the two functions $u$ and $f_{0}$).
By the lattice property of $X$ we see that $B$ is a subset of $X$.
Of course $B$ contains only non-negative elements and it is bounded
above by $g_{0}f_{0}$. Therefore there exists an element $h_{0}\in X$
which is a least upper bound of $B$. In particular 
\begin{equation}
h_{0}\le g_{0}f_{0}\,.\label{eq:dcw}
\end{equation}
 Let us now define $u_{0}:\Omega\to[0,\infty)$ by $u_{0}=\chi_{\Omega_{f_{0}}}\cdot\frac{h_{0}}{f_{0}}$.
In view of (\ref{eq:dcw}) we have that $u_{0}\le g_{0}$ and therefore
$u_{0}$ is essentially bounded on $\Omega$ and vanishes a.e. on
$\Omega\setminus\Omega_{f_{0}}$. It is easy to see that the function
$u_{0}$, or rather the equivalence class of which it is a representative,
is a least upper bound of $A$ in $L^{\infty}\left(\Omega_{f_{0}}\right)$.
Consequently $L^{\infty}\left(\Omega_{f_{0}}\right)$ has the LUBP.

Now suppose, conversely, that $L^{\infty}\left(\Omega_{f}\right)$
has the LUBP for each element $f\in X$. Let $A$ be an arbitrary
subset of non-negative elements of $X$ which is bounded from above
by some element $f_{0}\in X$. Of course $f_{0}$ must be non-negative
and every element of $A$ must vanish a.e on $\Omega\setminus\Omega_{f_{0}}$.
Let $B:=\left\{ \chi_{\Omega_{f_{0}}}\cdot\frac{u}{f_{0}}
\,
\vert
\, 
u\in A\right\} $.
Obviously each $g\in B$ is essentially bounded and in fact satisfies
$0\le g\le\chi_{\Omega_{f_{0}}}$ a.e. Therefore, since $L^{\infty}\left(\Omega_{f_{0}}\right)$
is a complete lattice, we deduce that $B$ has a least upper bound
$g_{0}\in L^{\infty}\left(\Omega_{f_{0}}\right)$. It is a trivial
matter to check that the measurable function $h_{0}:=g_{0}f_{0}$
is a least upper bound in $X$ of $A$ and this completes the proof.
\end{proof}

Combining Remark \ref{rmk: examples of complete lattices. } and Claim
\ref{Claim: X has LUBP iff Linfty(Omega) has LUBP} one may easily
deduce the following:
\begin{cor}
Given a Banach lattice of measurable functions $X$, if the support
of every element in $X$ is $\sigma$-finite, then $X$ is a complete
lattice. 
\end{cor}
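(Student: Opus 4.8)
The plan is to deduce the Corollary directly from Claim \ref{Claim: X has LUBP iff Linfty(Omega) has LUBP} together with the part of Remark \ref{rmk: examples of complete lattices. } which asserts that $L_\infty$ over a $\sigma$-finite measure space is complete. Claim \ref{Claim: X has LUBP iff Linfty(Omega) has LUBP} tells us that $X$ has the LUBP precisely when $L^\infty(\Omega_f)$ has the LUBP for every $f\in X$, so the entire task reduces to verifying this latter condition under the hypothesis on supports. Accordingly, the first step is to fix an arbitrary $f\in X$ and turn attention to the Banach lattice $L^\infty(\Omega_f)$.

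Next I would use the hypothesis that $\Omega_f$ is $\sigma$-finite. The space $L^\infty(\Omega_f)$ was defined as the subspace of $L^\infty(\Omega,\Sigma,\mu)$ consisting of the essentially bounded functions vanishing a.e.\ on $\Omega\setminus\Omega_f$, and this is canonically identified, as a Banach lattice, with $L^\infty$ of the restricted measure space $\left(\Omega_f,\Sigma\cap\Omega_f,\mu|_{\Omega_f}\right)$. Since that restricted space is $\sigma$-finite, the portion of Remark \ref{rmk: examples of complete lattices. } concerning $L_\infty$ over $\sigma$-finite measure spaces shows that $L^\infty(\Omega_f)$ is a complete lattice, hence has the LUBP.

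Because $f$ was arbitrary, the preceding step establishes that $L^\infty(\Omega_f)$ has the LUBP for every $f\in X$. Applying the nontrivial implication of Claim \ref{Claim: X has LUBP iff Linfty(Omega) has LUBP} then yields that $X$ has the LUBP, i.e.\ that $X$ is a complete lattice, which is the assertion of the Corollary.

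The argument is almost entirely a bookkeeping assembly of two earlier results, so there is no serious obstacle. The only point deserving a word of care is the lattice identification of $L^\infty(\Omega_f)$, a subspace of $L^\infty(\Omega)$, with $L^\infty$ of the restricted $\sigma$-finite measure space, so that the completeness statement of Remark \ref{rmk: examples of complete lattices. } genuinely applies; but since the LUBP is a purely order-theoretic property it is preserved under this isomorphism, and nothing further is needed.
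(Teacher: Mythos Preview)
Your proof is correct and follows exactly the route the paper indicates: the paper does not give a separate argument but simply states that the Corollary follows by combining Remark \ref{rmk: examples of complete lattices. } with Claim \ref{Claim: X has LUBP iff Linfty(Omega) has LUBP}, which is precisely what you have spelled out.
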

\begin{rem} \label{Proof that X is a complete lattice iff X^(p) is}Once
we have proven Claim \ref{Claim: X has LUBP iff Linfty(Omega) has LUBP}
the proof of Claim \ref{Clm: X is complete iff X^p is-1} immediately
follows from the fact that $\Omega_{f}=\Omega_{\left|f\right|}=\Omega_{\left|f\right|^{\frac{1}{p}}}$
for each measurable $f$. \end{rem}

We may now apply Claim \ref{Clm:NonNegative} to prove Claim \ref{Clm: X_0+X_1 is complete iff X_0, X_1 are-1}:

\begin{proof} We first assume that $X_{0}$ and $X_{1}$ are two
complete lattices, and prove that $X_{0}+X_{1}$ is a complete lattice.

In view of Claim \ref{Clm:NonNegative} it will suffice to show that
if $A\subseteq X_{0}+X_{1}$ is any collection of non-negative functions
which is bounded from above, then $A$ has a least upper bound in
$X_{0}+X_{1}$.

Let $f\in X_{0}+X_{1}$ be an upper bound of $A$. It is well known
(cf. (\ref{equi. of K and D})) that there is a measurable set $E$
such that $f\chi_{E}\in X_{0}$ and $f\chi_{\Omega\setminus E}\in X_{1}$.
We now define 
\begin{eqnarray*}
A_{0} & = & \left\{ a\chi_{E}
\,
\vert
\, 
a\in A\right\} \\
A_{1} & = & \left\{ a\chi_{\Omega\setminus E}
\,
\vert 
\,
a\in A\right\} \,.
\end{eqnarray*}

Clearly $0\leq a\leq f$ for every $a\in A$, hence $0\leq a\chi_{E}\leq f\chi_{E}$
and $0\le a\chi_{\Omega\setminus E}\leq f\chi_{\Omega\setminus E}$,
hence $A_{0}$ is a bounded subset of $X_{0}$ and $A_{1}$ is a bounded
subset of $X_{1}$. Since $X_{0},\, X_{1}$ are complete lattices,
$A_{0}$ and $A_{1}$ both have least upper bounds, respectively $b_{0}\in X_{0}$
and $b_{1}\in X_{1}$.

It is easy to verify that $b=b_{0}+b_{1}\in X_{0}+X_{1}$ is a least
upper bound of $A$, and therefore that $X_{0}+X_{1}$ is a complete
lattice.

We now turn to the second part of the proof. We assume that $X_{0}+X_{1}$
is a complete lattice, and prove that both $X_{0}$ and $X_{1}$ are
complete lattices.

Indeed, suppose that $A\subseteq X_{0}$ is bounded from above by
an element of $X_{0}$. That is, there 
exists an element
$g\in X_{0}$ such that
$f\leq g$ for every $f\in A$. Since clearly $g\in X_{0}+X_{1}$,
the set $A$ is bounded from above as a subset of $X_{0}+X_{1}$ and
hence it has a least upper bound, say $h\in X_{0}+X_{1}$. Since for
any $f\in A$ we have $f\leq h\leq g$, the lattice property of $X_{0}$
implies that $h\in X_{0}$. It is clear that the element $h$ is a
least upper bound of $A$ with respect to $X_{0}$. Thus $X_{0}$
is a complete lattice, and an analogous proof shows the same for $X_{1}$.

This completes the proof. \end{proof}

\textit{Acknowledgements:} We thank Alexander Ioffe and Mieczys{\l{}}aw
Masty{\l{}}o for some helpful discussions. We also thank Lech Maligranda
for some helpful remarks, as mentioned in Section \ref{sec:main}.

\end{document}